\newtheorem{theorem}{Theorem}
\newtheorem{defn}{Definition}
\newtheorem{lem}[theorem]{Lemma}
\newtheorem{cor}[theorem]{Corollary}
\theoremstyle{definition}
\newtheorem{rmk}{Remark}
\newcommand{\F}{\mathbb F}
\newcommand{\N}{\mathbb N}
\newcommand{\Z}{\mathbb Z}
\newcommand{\R}{\mathbb R}
\title[An inverse theorem for GAP]{An inverse theorem for Generalized Arithmetic Progression with mild multiplicative property}
\author{Ernie Croot and Junzhe Mao}
\address{School of Mathematics,
Georgia Institute of Technology,
Atlanta, GA 30332-0160 USA} 
\email{ecroot@math.gatech.edu}
\address{School of Mathematics,
Georgia Institute of Technology,
Atlanta, GA 30332-0160 USA}
\email{jmao87@gatech.edu}
\date{}
\begin{document}
\begin{abstract}
    In this paper we prove a structural theorem for generalized arithmetic progressions in $\F_p$ which contain a large product set of two other progressions.
\end{abstract}
\maketitle

\section{Introduction}
Let $p$ be a prime number, $\F_p$ be the finite field with $p$ elements. Recall that a generalized arithmetic progression (GAP) of rank $d$ in $\F_p$ is a set of the form \[B=x_0+\{\sum_{i=1}^da_ix_i:a_i\in[0,N_i-1] \}\]for some elements $x_0,x_1,\ldots,x_d\in\F_p$ and positive integers $2\leq N_1,\ldots,N_d\leq p.$ We say $B$ is proper if $|B|=N_1N_2\ldots N_d.$ GAPs play an important role in additive combinatorics, as they exhibit an almost periodic behavior under addition. Another example of additively structured sets is Bohr sets. Given $\Gamma\subset\F_p$ and $\epsilon>0,$ the Bohr set $B(\Gamma,\epsilon)$ is defined as \[B(\Gamma,\epsilon)=\left\{x\in\F_p:\left\|\frac{xr}{p}\right\|<\epsilon,\forall r\in\Gamma\right\}.\]Here $\left\|\cdot\right\|$ denotes the distance to the nearest integer. One can think of Bohr sets as approximate subspaces or (additive) subgroups in $\F_p$. It turns out that GAPs and Bohr sets are closely related. Using the geometry of numbers, one can show that any Bohr set $B(\Gamma,\epsilon)$ must contain a large symmetric proper GAP with bounded rank. This fact has been used to prove a foundational result in additive combinatorics, namely Freiman's theorem: given a finite set $A\subset\F_p$  and some $C>0,$ if $|A+A|\leq C|A|,$ where\[A+A=\{a_1+a_2:a_1,a_2\in A\}, \]then there exists a  GAP $B$ such that $A\subset B$ and $|B|\ll_C|A|$. Here we use the Vinogradov notation $\ll_C$ meaning that there is some constant $K>0$ depending on $C$ so that $|B|\leq K|A|.$ Freiman's theorem tells us that GAPs are the ``standard model'' for sets with rich additive structures. For more properties and applications of Bohr sets and GAPs, we refer the reader to \cite{Tao:2006aa}.  

One of the reasons people are concerned about these additively structured sets is the sum-product phenomenon, which basically says no set can have rich additive structure and multiplicative structure simultaneously. For example, in \cite{Erdos:1983aa} Erd{\"o}s and Szemer{\'e}di conjectured that if a set $A\subset\Z$ satisfies \[|A+A|\leq K|A| \]for some constant $K>0,$ then for any $\epsilon>0,$ $|AA|\gg|A|^{2-\epsilon}$, where \[AA=\{a_1a_2:a_1,a_2\in A\}. \] This has been confirmed for $A\subset\R$ by a result of Elekes and Ruzsa \cite{Elekes:2003aa}, and recently verified for thin sets $A\subset\F_p$ by a result of Kerr, Mello and Shparlinski \cite{Kerr:2024aa}. The case when $A\subset\F_p$ and $|A|\sim p^{1/2}$ remains open. We refer the reader to \cite{Mohammadi:2023aa,Rudnev:2020aa} for some relevant results.
To prove such an estimate, a common method is to bound the multiplicative energy $E(A)$ of $A,$ defined as the number of solutions to the equation \[a_1a_2=a_3a_4,\ a_1,a_2,a_3,a_4\in A. \]By Freiman's theorem, this reduces to bounding the multiplicative energy of GAPs in $\F_p.$ Better bounds on the multiplicative energy also have many applications in number theory. For example, Chang \cite{Chang:2008aa} used a nontrivial estimate for the multiplicative energy to give a Burgess-type bound for character sums over proper GAPs. Following a similar approach Hanson \cite{Hanson:2015aa} generalized this to character sums over regular Bohr sets. In fact, if one can prove the optimal bound \[E(B)\ll|B|^{2+o(1)} \]for any proper GAP $B\subset\F_p,$ then for any nonprincipal Dirichlet character $\chi$ of $\F_p,$ Burgess's bound can be fully generalized to \[\sum_{n\in B}\chi(n)=O(p^{-\delta}|B|) \]whenever $|B|\geq p^{1/4+\epsilon}.$ With current techniques one can prove near-optimal bounds on the multiplicative energy of GAPs with some extra structure. For example, Kerr \cite{Kerr:2021aa} proved the following result:
\begin{theorem}[Kerr]
	Let $P\subset \F_p$ be a GAP given by \[P=x_0+\{\sum_{i=1}^da_ix_i:a_i\in[1,N]\} \]and suppose \[P^\prime=\{\sum_{i=1}^da_ix_i:|a_i|\leq N^2\} \]is proper. Then $E(P)\ll|P|^2(\log N)^{2d+1}.$
\end{theorem}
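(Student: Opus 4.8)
The plan is to count multiplicative quadruples by their common ratio, use the properness hypothesis to convert the problem into counting lattice points in a box, and then estimate the resulting divisor-type sum.

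\smallskip
\noindent\emph{Reduction to a ratio sum.} Write $E(P)=\#\{(p_1,p_2,p_3,p_4)\in P^4:\ p_1p_2=p_3p_4\}$. The quadruples in which some $p_i=0$ number $O(|P|^2)$, and in every other quadruple $p_1/p_3=p_4/p_2=:\lambda\in\F_p^{\times}$. Hence
\[
E(P)\ \ll\ |P|^2+\sum_{\lambda\in\F_p^{\times}}f(\lambda)^2,\qquad f(\lambda):=\#\{(p,q)\in P^2:\ p=\lambda q\}.
\]
Since $f(\lambda)\ne 0$ only for the at most $|P|^2$ values of $\lambda$ that actually occur as ratios of elements of $P$, it suffices to bound $\sum_{\lambda\in R}f(\lambda)^2$ over this set $R$, with $|R|\le|P|^2=N^{2d}$.

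\smallskip
\noindent\emph{From $f(\lambda)$ to a lattice point count.} Put $\psi(a)=\sum_{i=1}^d a_ix_i$, so $P=x_0+\psi([1,N]^d)$. For fixed $\lambda$ the equation $p=\lambda q$ is a single $\F_p$-linear relation between the coordinate vectors of $p$ and $q$, whose solution set inside $[1,N]^{2d}$ is a coset of the sublattice
\[
S_\lambda=\bigl\{(u,v)\in\Z^{2d}:\ \psi(u)\equiv\lambda\psi(v)\ (\mathrm{mod}\ p)\bigr\},
\]
of covolume $p$ in $\Z^{2d}$ (here $x_1\ne0$). A shift-of-origin argument gives $f(\lambda)\le|S_\lambda\cap Q|$ with $Q=[-(N-1),N-1]^{2d}$. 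Properness enters in two ways. First, any nonzero $(u,v)\in S_\lambda$ with $\|u\|_\infty,\|v\|_\infty<N^2$ must have $v\ne0$ and $\psi(v)\ne0$ (else $\psi(u)\equiv0$ forces $u=0$), so it determines $\lambda=\psi(u)/\psi(v)$; and this $\lambda$ is a reduced fraction $r/s$ with $\max(|r|,|s|)<N$ precisely when $u\parallel v$, in which case lifting $\psi(su)\equiv\psi(rv)$ (both arguments have sup-norm $<N^2$) to the identity $su=rv$ shows $S_\lambda\cap Q=\{(rv_0,sv_0):v_0\in\Z^d\}\cap Q$ and hence $f(\lambda)\ll_d\bigl(N/\max(|r|,|s|)\bigr)^d$. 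Second, by the geometry of numbers $f(\lambda)\le|S_\lambda\cap Q|\ll_d\prod_{i=1}^{2d}\bigl(1+N/\mu_i(\lambda)\bigr)$, where $\mu_1(\lambda)\le\cdots\le\mu_{2d}(\lambda)$ are the successive minima of $S_\lambda$ and $\prod_i\mu_i(\lambda)\asymp p$.

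\smallskip
\noindent\emph{The divisor sum and the main obstacle.} Summing the square of the first bound over all reduced fractions $r/s$ with $\max(|r|,|s|)<N$ gives $\sum\bigl(N/\max(|r|,|s|)\bigr)^{2d}\ll_d N^{2d}\sum_{m<N}m\cdot m^{-2d}\ll_d N^{2d}\log N$, which is already within the claimed bound — and for $d=1$ this is the whole story, since there every $\lambda\in R$ is such a fraction. The remaining, genuinely hard, part is the contribution of $\lambda\in R$ that are not small rationals, for which $S_\lambda$ has unbalanced successive minima; one organizes $\sum_{\lambda\in R}\prod_i\bigl(1+N/\mu_i(\lambda)\bigr)^2$ dyadically in the sizes of the $\mu_i(\lambda)$, using the counting lemma $\#\{\lambda:\mu_1(\lambda)\le t\}\ll_d t^{2d}$ (again a consequence of properness) together with $\prod_i\mu_i(\lambda)\asymp p$. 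The obstruction to a clean estimate is that two short vectors $(u,v),(u',v')\in S_\lambda$ force the \emph{quadratic} relation $\psi(u)\psi(v')\equiv\psi(u')\psi(v)$, i.e.\ a small integer relation among the products $x_ix_j$, whereas the hypothesis on $P'$ controls only linear relations among the $x_i$. The solutions of $\psi(u)\psi(v')=\psi(u')\psi(v)$ with $(u,v)\parallel(u',v')$, or with $u\parallel v$ and $u'\parallel v'$, contribute only $O(|P|^2\log N)$; the crux is to show the remaining solutions contribute no more than $O\bigl(|P|^2(\log N)^{2d+1}\bigr)$, with $2d$ logarithms coming from a $2d$-dimensional lattice point count and the last one from the outer dyadic sum over the size of $f(\lambda)$.
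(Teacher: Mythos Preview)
The paper does not prove this statement at all: Theorem~1 is quoted from Kerr's work \cite{Kerr:2021aa} as background motivation, and no proof is given in the paper. There is therefore nothing to compare your argument against.

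That said, your proposal is not a proof but an outline that stops at the hard step. You correctly reduce to $\sum_\lambda f(\lambda)^2$, set up the lattice $S_\lambda$, and handle the rank-one case where the short vectors are parallel. But you then explicitly flag the ``main obstacle'' --- controlling $\lambda$ for which $S_\lambda$ has two or more independent short vectors, which leads to quadratic relations among the $x_ix_j$ not governed by the properness of $P'$ --- and you do not resolve it. Your final paragraph merely asserts that ``the crux is to show the remaining solutions contribute no more than $O(|P|^2(\log N)^{2d+1})$'' and gestures at a dyadic decomposition, without carrying it out or explaining why the quadratic relations can be controlled. As written this is a plan with an acknowledged gap, not a proof; if you want to complete it you would need to consult Kerr's original argument for how that step is actually handled.
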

The above discussion suggests that we can only hope for some mild multiplicative structure in a GAP or Bohr set. Let's consider \[B=\{a+bt:|a|,|b|\leq N\}, \]which is a symmetric GAP. Assume $t$ satisfies a quadratic polynomial equation mod $p$\[c_0+c_1t+t^2=0 \]with $c_i\in\Z$ bounded by some constant $C$ independent of $p$ . Let $A=\{a^\prime+b^\prime t:|a^\prime|,|b^\prime|\leq\frac{1}{2C} N^{1/2}\},$ then obviously $A$ satisfies the condition of Kerr's theorem. As a result $E(A)$ is small and $A$ does not possess a rich multiplicative structure. On the other hand, it's easy to check that $AA\subset B.$ This set $B$ gives us an example of a GAP with some mild multiplicative properties, i.e. containing a large product set. It raises the following question: if a GAP $B$ contains a product set $AA$ with $|B|^{1/2} \ll |A| \ll |B|^{1/2}$ and 
$A$ also a GAP, what can we say about $B$? In this paper we show that under some natural assumptions on $A$ and $B,$ the generators of $B$ must be given by the roots of some low-height, low-degree polynomials.

$\textbf{Notations.}$  For any element $x\in\F_p,$ we define its integer height\[|x|=\min{\{|a|:a\in\Z,\ a\equiv x\pmod p\}}. \]
For a polynomial $f\in\Z[x],$ let ${\overline f}(x)$ 
denote the projection of $f(x)$ into ${\mathbb F}_p[x]$, let $H(f)$ denote the height of $f,$ which is the maximum of the absolute value of coefficients of $f$. We use $M_n(\F_p)$ to denote the ring of $n\times n$ matrices over $\F_p.$

For any $x>0,$ let $[x]=\{1,2,\ldots,\lfloor x\rfloor\},$ where $\lfloor x\rfloor$ denote the integer part of $x.$

Given $x\in\F_p,A\subset\F_p,$ we write the dilation of $A$ by $x$ as $x\cdot A=\{xa:a\in A\}.$

For any $n\in\N, A\subset\F_p,$ we write the iterated sumset as \[\begin{matrix}
	nA=\underbrace{A+A+\cdots+A}\\ \hspace{9mm}n
\end{matrix}\]

\section{Main Results}
We first consider a simple case, namely a proper GAP $B$ of rank $2$ and size at most $o(p^{2/3})$. We show that if $B$ contains the product set of a sub-GAP $A$ of size $|A|\approx |B|^{1/2}$, then the generators of $B$ must be ``algebraic'' in the sense that they are the roots of some low-height quadratic polynomials.
\begin{theorem}
	Let $0<c<1,N=N(p)\in\N$ with $\lim_{p\to \infty}N(p)=\infty$, $N = o(p^{1/3})$, $B=\{a+bt:|a|,|b|\leq N\}\subset\F_p$ be proper and $|B|=o(p^{2/3}).$ Suppose $A=\{a^\prime+b^\prime t:|a^\prime|,|b^\prime|\leq cN^{1/2}\}$ satisfies $AA\subset B.$ Then there is a constant $C$ depending only on $c$ so that when $p$ is sufficiently large, there exists some quadratic polynomial $f\in\Z[x]\setminus\{0\}$ with $H(f)\leq C$ and $\overline{f}(t) = 0$ in 
 ${\mathbb F}_p$.
\end{theorem}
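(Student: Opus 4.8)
The plan is to first extract an explicit polynomial relation $t^2\equiv a_0+b_0t$ with (a priori large) integer coefficients, and then to use properness of $B$ together with the product structure to force those coefficients down to bounded size.

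\textbf{Reduction.} Since $N(p)\to\infty$, for $p$ large we have $cN^{1/2}\ge 1$, so $1,t\in A$ and hence $t^2=t\cdot t\in AA\subseteq B$. By definition of $B$ there are $a_0,b_0\in\Z$ with $|a_0|,|b_0|\le N$ and $t^2\equiv a_0+b_0t\pmod p$. Put $N^*=\max(|a_0|,|b_0|)$; if $N^*=0$ then $t\equiv 0$ and $f(X)=X^2$ works, so I may assume $N^*\ge 1$. The candidate will be $f(X)=X^2-b_0X-a_0$ (monic, hence in $\Z[x]\setminus\{0\}$, with $\overline f(t)=0$), and everything reduces to proving $N^*\le C(c)$.

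\textbf{Step A: the relation lattice cannot be short.} Consider $\Lambda=\{(x,y)\in\Z^2 : x+yt\equiv 0\}$; since $(x,y)\mapsto x+yt$ surjects onto $\F_p$, one has $\det\Lambda=p$, and properness of $B$ says exactly that $\Lambda$ has no nonzero vector in $[-2N,2N]^2$, i.e.\ $\lambda_1(\Lambda)>2N$ in the $\ell^\infty$ norm. The key point is that the companion matrix $T=\bigl(\begin{smallmatrix}0&a_0\\1&b_0\end{smallmatrix}\bigr)$ of $f$ (multiplication by $t$) maps $\Lambda$ into itself. If $e_1$ is a shortest vector of $\Lambda$ (automatically primitive), then $Te_1\in\Lambda$ with $\|Te_1\|_\infty\ll N\lambda_1$; writing $Te_1=\mu_1e_1+\mu_2e_2$ in a reduced basis and using Minkowski's second theorem ($\lambda_1\lambda_2\asymp p$) gives $|\mu_2|\ll N\lambda_1^2/p$. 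Hence if $\lambda_1\le c_0\sqrt{p/N}$ for a small absolute $c_0$, then $\mu_2=0$, i.e.\ $e_1$ is a $\Z$-eigenvector of $T$; its eigenvalue $\mu$ is then an integer root of $f$, and since $f=(X-\mu)(X-(b_0-\mu))$ splits over $\Z$ and $t$ is a root of $\overline f$, we get $t\equiv m\pmod p$ for some $m\in\{\mu,b_0-\mu\}$. From $|\mu(b_0-\mu)|=|a_0|\le N$ one checks $|m|\le 2N$, so $(-m,1)$ is a nonzero vector of $\Lambda$ lying in $[-2N,2N]^2$, contradicting properness. Therefore $\lambda_1>c_0\sqrt{p/N}$, and since $N=o(p^{1/3})$ this forces $\lambda_1\gg N$ (say $\lambda_1>4N$) for $p$ large.

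\textbf{Step B: shrinking the coefficients, and the obstacle.} For $|k|,|l|\le M:=\lfloor cN^{1/2}\rfloor$ we have $kt,lt\in A$, so $kl\,t^2=(kt)(lt)\in AA\subseteq B$, which produces $|a|,|b|\le N$ with $w:=(kla_0-a,\,klb_0-b)\in\Lambda$ and $\|w\|_\infty\le |kl|\,N^*+N$. If $M^2N^*+N<\lambda_1$, then $k=l=M$ forces $w=0$, so $|a_0|,|b_0|\le N/M^2\le 4/c^2$ and we are done with $C=4/c^2$. Otherwise $M^2N^*+N\ge\lambda_1$; I would then set $L=\lfloor(\lambda_1-N-1)/N^*\rfloor$, so that $L<M^2$ and (using $\lambda_1\gg N$) $L\ge\lambda_1/(2N^*)$, and pick $|k|,|l|\le M$ with $L/2\le kl\le L$; the corresponding $w$ again satisfies $\|w\|_\infty\le LN^*+N<\lambda_1$, hence $w=0$, giving $N^*\le N/(kl)\le 2N/L\le 4NN^*/\lambda_1$. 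Since $N^*\ge 1$ this forces $\lambda_1\le 4N$, contradicting Step A; so only the first alternative occurs, yielding $N^*\le 4/c^2$. I expect Step A — showing a proper rank-$2$ GAP cannot have a too-short relation lattice — to be the crux: the only available leverage is $T$-invariance of $\Lambda$, which converts an overly short lattice vector into an arithmetic progression inside $B$ and so violates properness, and one must get the quantitatively sharp bound $\lambda_1\gg\sqrt{p/N}$. The second delicate point is the interlocking of Steps A and B: Step B closes only because the bound it produces on $N^*$ is itself proportional to $N^*/\lambda_1$, so the $N^*$'s cancel and one is left with $\lambda_1\ll N$, which Step A forbids; this cancellation, together with carrying $N^*$ rather than the cruder $N$ through Step B, is exactly what lets the argument reach the full range $N=o(p^{1/3})$ rather than only $N\ll p^{1/5}$.
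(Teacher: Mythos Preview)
Your proof is correct, but it takes a considerably more elaborate route than the paper's. Both arguments begin the same way: extract $t^2\equiv a_0+b_0t$ with $|a_0|,|b_0|\le N$, and then aim to show (in your lattice language) that the relation lattice $\Lambda$ has $\lambda_1\gg N$, after which a multiplier argument forces $\max(|a_0|,|b_0|)$ to be bounded.

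The difference is in how $\lambda_1\gg N$ is obtained. You invoke a reduced basis, Minkowski's second theorem, and $T$-invariance of $\Lambda$ to show that a short $e_1$ would be an integer eigenvector of the companion matrix, hence $f$ would split over $\Z$ with small roots, contradicting properness; this yields the strong bound $\lambda_1\gg\sqrt{p/N}$, which you then weaken to $\lambda_1>4N$ via $N=o(p^{1/3})$. The paper instead argues completely elementarily: if $m_0-n_0t\equiv 0$ with $|m_0|,|n_0|\le 5N$ and $\gcd(m_0,n_0)=1$, substitute into $t^2=a_0+b_0t$ to get $m_0^2=a_0n_0^2+b_0m_0n_0$ in $\F_p$; since $N=o(p^{1/3})$, both sides are $<p/2$, so the identity holds in $\Z$, forcing $|n_0|=1$ and then a size contradiction. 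This gives $\lambda_1>5N$ directly with no lattice machinery. For Step~B the paper also differs slightly: it observes that $\Pi+\Pi$ (with $\Pi=\{uv:|u|,|v|\le cN^{1/2}\}$) covers a full interval $|w|<c^2N/2$, so $wt^2\in 2B$ for all such $w$, and then picks the single $w$ making $w\max(|a_0|,|b_0|)$ land just above $2N$; you instead stay inside $B$ and choose a product $kl$ in a dyadic window.

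Both approaches reach the full range $N=o(p^{1/3})$. What your route buys is a cleaner conceptual picture (everything is phrased via $\Lambda$ and $T$), and the quantitatively stronger intermediate bound $\lambda_1\gg\sqrt{p/N}$ --- though that extra strength is never used. What the paper's route buys is brevity and elementarity: no reduced bases, no Minkowski, just a lift to $\Z$ and a one-line size estimate. Your closing remark that the ``cancellation'' in Step~B is what reaches $N=o(p^{1/3})$ is misleading; the paper gets there without any such interlocking, because its Step~A already uses the hypothesis $N=o(p^{1/3})$ directly to lift the relation to $\Z$.
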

\begin{proof}
	Since $t^2\in AA\subset B,$ we have 
    \begin{equation} t^2=a_0+b_0t\label{1} \end{equation}
 for some $|a_0|,|b_0|\leq N.$ We claim that for any integer $m,n$ with $|m|,|n|\leq 5N,$ $m-nt\neq 0$ in $\F_p.$ Suppose for contradiction there exists $|m_0|,|n_0|\leq 5N$ so that $m_0-n_0t=0.$ Without loss of generality we may assume ${\rm gcd}(m_0,n_0)=1.$ Then (\ref{1}) implies 
 \[(\frac{m_0}{n_0})^2=a_0+\frac{b_0m_0}{n_0} \]
 and hence 
    \begin{equation}m_0^2=a_0n_0^2+b_0m_0n_0\ \text{in}\ \F_p. \label{2} \end{equation}
    By assumption $|B|=(2N+1)^2=o(p^{2/3}),$ when $p$ is sufficiently large we get 
    \[m_0^2<\frac{p}{2},\ |an_0^2+bm_0n_0|<\frac{p}{2}. \]
    Thus (\ref{2}) holds not only in $\F_p$ but also in $\Z.$ This implies $n_0\mid m_0^2$ and hence $|n_0|=1,m_0\equiv\pm t\pmod p$. From the fact that $B$ is proper we also know $|m_0|\geq|t|\geq N+1,$ therefore 
    \[m_0^2\pm b_0m_0\geq N+1>a_0, \] which is a contradiction to (\ref{2}). This finishes the proof of the claim.
	
	Now since $\lambda t^2\in B$ for any $\lambda\in \Pi := \{uv\ :\ |u|,|v| \leq c N^{1/2}\},$ consider 
    \[\{(\lambda+\mu)t^2:\lambda,\mu\in\Pi\}.\] 
    This set will include $wt^2$ for any integer $|w|<c^2 N/2.$ (To see this, take $b = \lfloor cN^{1/2}\rfloor$ and then consider the base-$b$ expansion of all positive integers 
    $\leq c^2N/2$, and note that such integers can be written as $a b + 1\cdot r$, where $0 \leq a \leq cN^{1/2}$ and $0 \leq r \leq cN^{1/2}$.)  
    Suppose $\max(|a_0|,|b_0|)\geq\frac{8}{c^2},$ let 
    \[w=\left\lfloor\frac{2N+1}{\max(|a_0|,|b_0|)}\right\rfloor+1,\]
    it is easy to check that $w<c^2N/2$ when $p$ is large, hence $wt^2\in 2B.$ On the other hand, $2N < \max(|wa_0|,|wb_0|)\leq 3N$. From the claim we know \[wt^2=wa_0+wb_0t\notin 2B, \]a contradiction. Hence $\max(|a_0|,|b_0|)\leq\frac{8}{c^2}$ and equation (\ref{1}) gives the polynomial we want.
\end{proof}
The proof strongly depends on the property that enlarging $B$ by some constant will still give us a proper GAP. We do not know if it remains true for $|B|\geq p^{2/3}$. This motivates us to define a special class of GAPs.
\begin{defn}
    A generalized arithmetic progression $B=x_0+\{\sum_{i=1}^da_ix_i:a_i\in[0,N_i-1]\}$ is called $\kappa$-isolated if there is no nonzero $(a_1,...,a_d) \in {\mathbb Z}^d$ with $|a_i| \leq \kappa N_i$, for all $i=1,...,d$, such that $\sum_ia_ix_i=0.$ 
\end{defn}
Working with isolated GAPs we are able to generalize Theorem 2 to arbitrary rank and product set of two different GAPs. Our main result is the following:
\begin{theorem}
    Let $0<c,c^\prime,\epsilon<1,d\in\N,N=N(p)\in\N$ with $\lim_{p\to \infty}N(p)=\infty$, \[B=\{\sum_{i=1}^da_ix_i:|a_i|\leq N\}\subset\F_p\] be $6$-isolated  with $x_1=1.$ Suppose there exist proper GAPs \[A=\{\sum_{i=1}^da_iy_i:|a_i|\leq cN^{1-\epsilon}\},\ A^\prime=\{\sum_{i=1}^da_iy_i^\prime:|a_i|\leq c^\prime N^{\epsilon}\} \]such that $AA^\prime\subset B.$ Then there is a constant $C=C(c,c^\prime,d)$ so that when $p$ is sufficiently large, for any $i\in[d],$ there exists some polynomial $f_i\in\Z[x]\setminus\{0\}$ of degree at most $d$ with $H(f_i)\leq C$ and $\overline{f}_i(x_i)=0$ in ${\mathbb F}_p$. 
\end{theorem}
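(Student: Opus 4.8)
The plan is to produce, for each $i$, a nonzero integer polynomial of degree at most $d$ and bounded height killing $x_i$, by first showing that $\gamma:=y_1y_1'$ satisfies such a polynomial and then transferring the conclusion to the $x_i$. Write $\rho\colon\Z^d\to\F_p$ for the additive map $e_k\mapsto x_k$. The hypothesis that $B$ is $6$-isolated says exactly that $\ker\rho$ meets the box $\{w\in\Z^d:\|w\|_\infty\le 6(2N+1)\}$ only at $0$; consequently, for any bounded $j$, an element $\rho(w)\in jB$ with $\|w\|_\infty\le 6(2N+1)-jN$ forces $\|w\|_\infty\le jN$, so the coordinates of elements of $jB$ cannot lie in the ``forbidden window'' $(jN,\,6(2N+1)-jN]$. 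Note $y_i,y_j',x_k\neq0$ by properness, so $\gamma$ and the $x_k$ are invertible.

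\textbf{Step 1 (products have bounded coordinates; the crux).} Since $y_i\in A$ and $y_j'\in A'$ we have $y_iy_j'\in AA'\subseteq B$. More importantly, if $n\in\Z$ has $|n|$ at most a fixed constant multiple of $N$, write $n=w\lfloor cN^{1-\epsilon}\rfloor+w'$ with $0\le w'<\lfloor cN^{1-\epsilon}\rfloor$ and $0\le w\le c'N^{\epsilon}$; then $n\,y_iy_j'=(\lfloor cN^{1-\epsilon}\rfloor y_i)(w\,y_j')+(w'y_i)(y_j')\in AA'+AA'\subseteq 2B$. This plays the role of ``$\lambda t^2\in B$, $wt^2\in 2B$'' in the proof of Theorem 2. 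Let $\mu_{ij}\in\Z^d$, $\|\mu_{ij}\|_\infty\le N$, be the coordinate vector of $y_iy_j'$. Since $\rho(n\mu_{ij})=n\,y_iy_j'\in 2B$, the coordinates of $n\mu_{ij}$ avoid the forbidden window $(2N,\,6(2N+1)-2N]$; as $n$ increases by $1$ the $\ell^\infty$-norm of $n\mu_{ij}$ changes by at most $\|\mu_{ij}\|_\infty\le N$, so it cannot jump across this window, and a short induction gives $\|n\mu_{ij}\|_\infty\le 2N$ for all such $n$. Taking $|n|$ of order $N$ then yields $\|\mu_{ij}\|_\infty=O_{c,c'}(1)$. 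In particular $z_i:=y_1'y_i$, $z_j':=y_1y_j'$ and $\gamma$ all have bounded coordinates in the basis $x_1,\dots,x_d$. This upgrade from ``lies in $B$'' to ``has $O(1)$ coordinates'' is the main obstacle, and it is precisely what the $6$-isolation hypothesis is for: it replaces the size bound $|B|=o(p^{2/3})$ of Theorem 2, which is unavailable here.

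\textbf{Step 2 (nonsingularity and closing up).} Write $\mathbf z=M\mathbf x$, where $\mathbf z=(z_1,\dots,z_d)^T$, $\mathbf x=(x_1,\dots,x_d)^T$, and $M$ is a $d\times d$ integer matrix whose rows are the vectors $\mu_{i1}$, so $M$ has $O(1)$ entries. If $M$ were singular, Cramer's rule yields a nonzero integer vector $v$ with $v^{T}M=0$ and entries bounded (they are minors of $M$, hence $O(1)$ by Step 1); then $y_1'\sum_iv_iy_i=\sum_iv_iz_i=\rho(v^{T}M)=0$, so $\sum_iv_iy_i=0$ is a vanishing integer combination of the $y_i$ with bounded coefficients — impossible for large $N$, since $A$ is proper. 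So $D:=\det M$ is a nonzero bounded integer, and $D\,x_k=y_1'\eta_k$ with $\eta_k:=\sum_i(\operatorname{adj}M)_{ki}y_i\in A$; symmetrically $D':=\det M'$ is a nonzero bounded integer with $D'x_k=y_1\eta_k'$, $\eta_k'\in A'$. Multiplying, $DD'\,x_kx_l=(y_1y_1')\eta_k\eta_l'=\gamma\,\eta_k\eta_l'$, and $\eta_k\eta_l'\in AA'\subseteq B$ has a bounded coordinate vector $\tau_{kl}$: expand $\eta_k\eta_l'$ into the $y_iy_j'=\rho(\mu_{ij})$, whose coordinates are $O(1)$ by Step 1. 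Hence $DD'\,x_kx_l=\gamma\,\rho(\tau_{kl})$ for all $k,l$; taking $l=1$ gives the matrix identity $DD'\,\mathbf x=\gamma T\mathbf x$, where $T=(\tau_{k1,m})_{k,m}$ is a $d\times d$ integer matrix with $O(1)$ entries.

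\textbf{Step 3 (extracting the polynomials).} Since $\mathbf x\neq0$ and $\gamma\neq0$, the identity $DD'\,\mathbf x=\gamma T\mathbf x$ exhibits $DD'/\gamma$ as an eigenvalue of $T$ with eigenvector $\mathbf x$. Hence $h(\gamma)=0$, where $h(Z):=\det(DD'\,I-ZT)\in\Z[Z]$ is nonzero (its value at $0$ is $(DD')^d\neq0$), of degree at most $d$, and of height bounded in terms of $c,c',d$; this is the promised polynomial for $\gamma$. When $DD'/\gamma$ is a simple eigenvalue of $T$ the eigenspace is one-dimensional, so $\mathbf x$ is proportional to a column of $\operatorname{adj}(DD'\,I-\gamma T)$; dividing by the first coordinate of that column (which equals $x_1=1$ times a nonzero scalar) expresses each $x_k$ as a ratio of polynomials in $\gamma$ of degree $<d$ with bounded integer coefficients, and eliminating $\gamma$ against $h$ by a resultant produces a nonzero $f_k\in\Z[x]$ of degree at most $d$ and of bounded height with $\overline{f_k}(x_k)=0$ in $\F_p$; for $k=1$ one may just take $f_1(x)=x-1$. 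The remaining case, where $DD'/\gamma$ is a repeated eigenvalue of $T$ (so $\operatorname{adj}(DD'\,I-\gamma T)=0$), is handled by using the full system $DD'\,x_kx_l=\gamma\,\rho(\tau_{kl})$ in place of the single identity with $l=1$, equivalently by a descent on the rank $d$.
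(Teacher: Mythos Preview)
Your Steps 1 and 2 are essentially the paper's argument: you correctly use the $6$-isolation to upgrade ``$y_iy_j'\in B$'' to ``$y_iy_j'$ has $O_{c,c'}(1)$ coordinates in the $x$-basis'', and your $M,M'$ are the paper's $T,T'$ (with the harmless specialisation $i=j=1$). Your derivation of the bounded $\tau_{kl}$ by directly expanding $\eta_k\eta_l'$ in the $y_iy_j'$ is even a small simplification over the paper, which re-runs the multiples-in-$2B$ argument at that point.

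The gap is in Step 3. Having obtained $DD'\,x_kx_l=\gamma\,\rho(\tau_{kl})$, you keep only the slice $l=1$, get $DD'\,\mathbf x=\gamma T\mathbf x$, and then try to recover each $x_k$ from $\gamma$ via a column of $\operatorname{adj}(DD'I-\gamma T)$ followed by a resultant. Two things are not justified: (i) in the simple-eigenvalue case you need a column of the adjugate with nonzero first entry, but which column works depends on $\gamma$, so you cannot fix $P,Q$ uniformly and then take a single resultant of bounded degree and height; and (ii) the repeated-eigenvalue case (``descent on the rank $d$'') is asserted, not proved. The paper avoids this case split entirely by using the \emph{full} system you wrote down: for each $k$ set $T_k\in M_d(\Z)$ with rows $\tau_{kl}$, so that $DD'\,x_k\mathbf x=\gamma\,T_k\mathbf x$. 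Taking $k=1$ and using $x_1=1$ gives $DD'\,\mathbf x=\gamma\,T_1\mathbf x$; one checks (via your own Cramer/adjugate argument, together with properness of $B$) that $T_1$ is invertible, and then
\[
(\operatorname{adj}(T_1)\,T_k)\,\mathbf x\ =\ \det(T_1)\,T_1^{-1}T_k\,\mathbf x\ =\ \det(T_1)\,x_k\,\mathbf x .
\]
Thus $\det(T_1)\,x_k$ is an eigenvalue of the bounded integer matrix $\operatorname{adj}(T_1)T_k$, and its characteristic polynomial supplies the desired $f_k$ of degree $\le d$ and bounded height. This is exactly the move you gesture at (``using the full system in place of $l=1$''), and it replaces the adjugate/resultant detour with a one-line eigenvalue identity that needs no eigenvalue-multiplicity casework.
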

To prove Theorem 3, we need the following lemma from linear algebra.
\begin{lem}
    Let $T=(t_{ij})\in M_d(\F_p)$. Suppose there exists $C\geq 1$ so that $|t_{ij}|\leq C$ and $T$ is not invertible. Then there exists a nonzero vector $(a_1,a_2,\ldots,a_d)\in\F_p^d$ with $|a_i|\leq d!C^d$ such that \[(a_1,a_2,\cdots,a_d)T=0.\]
\end{lem}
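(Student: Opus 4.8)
The plan is to build the required left null vector \emph{explicitly} from sub-determinants of $T$, so that the height bound is an immediate consequence of the Leibniz expansion of a determinant of a matrix with bounded entries. First I would set $r=\operatorname{rank}(T)$; since $T$ is not invertible, $r\le d-1$, and if $r=0$ then $T=0$ and $a=(1,0,\dots,0)$ already works (note $d!\,C^d\ge 1$), so assume $r\ge 1$. Choose a maximal linearly independent set of rows of $T$, say rows $i_1<\dots<i_r$; then every other row lies in their span, and we may fix any $k\in[d]\setminus\{i_1,\dots,i_r\}$. The $r\times d$ matrix on these rows has rank $r$, hence has $r$ linearly independent columns, say with indices $j_1<\dots<j_r$; let $S\in M_r(\F_p)$ be the submatrix of $T$ on rows $i_1,\dots,i_r$ and columns $j_1,\dots,j_r$, which is invertible.

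Next I would define the candidate vector $a$. For $1\le m\le r$ let $S_m$ be the matrix obtained from $S$ by replacing its $m$-th row with $(t_{k j_1},\dots,t_{k j_r})$, and set $a_{i_m}=(-1)^{m+1}\det S_m$, $a_k=-\det S$, and $a_\ell=0$ for every $\ell\notin\{i_1,\dots,i_r,k\}$. Two things then need to be checked. For the identity $aT=0$: row $k$ of $T$ lies in the span of rows $i_1,\dots,i_r$, so $\operatorname{row}_k(T)=\sum_m c_m\operatorname{row}_{i_m}(T)$ for a unique $(c_1,\dots,c_r)$; restricting this to columns $j_1,\dots,j_r$ and using invertibility of $S$ forces $c$ to be the Cramer solution of $cS=(t_{kj_1},\dots,t_{kj_r})$, i.e. $(\det S)\,c_m=(-1)^{m+1}\det S_m=a_{i_m}$. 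Consequently $\sum_m a_{i_m}\operatorname{row}_{i_m}(T)+a_k\operatorname{row}_k(T)=(\det S)\bigl(\sum_m c_m\operatorname{row}_{i_m}(T)-\operatorname{row}_k(T)\bigr)=0$, which is exactly $aT=0$, and $a\ne 0$ because $a_k=-\det S\ne 0$. For the height bound: every nonzero entry of $a$ is, up to sign, the determinant of an $r\times r$ matrix all of whose entries are entries $t_{ij}$ of $T$; lifting each such entry to an integer representative of absolute value $\le C$ and expanding by the Leibniz formula shows that this determinant has an integer representative of absolute value $\le r!\,C^r$, so its height is $\le r!\,C^r\le d!\,C^d$ since $r\le d$ and $C\ge 1$.

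I expect the only genuinely delicate point to be the step inside the verification of $aT=0$ where one passes from the \emph{restricted} $r\times r$ system (rows matched on the $r$ chosen columns $j_1,\dots,j_r$) back to the \emph{full} identity (rows matched on all $d$ columns). This is precisely where maximality of $r$ and the independence of rows $i_1,\dots,i_r$ must be used: row $k$ already lies in the span $V$ of those rows, the coefficient vector realizing it in $V$ is unique, and restriction to a column set on which $\operatorname{row}_{i_1},\dots,\operatorname{row}_{i_r}$ remain linearly independent is an injective map on $V$ — so the restricted identity determines, and therefore coincides with, the full one. Everything else (the sign bookkeeping in Cramer's rule and the elementary estimate $|\det|\le r!\,C^r$ for a bounded-entry matrix) is routine.
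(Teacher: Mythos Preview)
Your approach is essentially the paper's: both pick a maximal independent set of rows, select columns on which those rows remain independent, build the left null vector out of sub-determinants of the resulting bounded-entry submatrix, and then pass from the restricted columns back to all $d$ columns by the uniqueness argument you flag at the end. The paper packages the construction as the last row of the adjugate of an $(r+1)\times(r+1)$ extension of $S$; you use Cramer's rule on the $r\times r$ block $S$ directly. These are the same idea and yield the same $r!\,C^r\le d!\,C^d$ bound.

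There is, however, a genuine sign slip that breaks the argument as written. For the row system $cS=b$ with $b=(t_{kj_1},\dots,t_{kj_r})$ and $S_m$ equal to $S$ with its $m$-th \emph{row} replaced by $b$, Cramer's rule gives
\[
(\det S)\,c_m \;=\; \det S_m,
\]
with no factor $(-1)^{m+1}$ (apply the column-version to $S^T c^T=b^T$ and note that replacing column $m$ of $S^T$ by $b^T$ is the transpose of replacing row $m$ of $S$ by $b$). With your choice $a_{i_m}=(-1)^{m+1}\det S_m$ and $a_k=-\det S$ one gets
\[
\sum_{m=1}^{r} a_{i_m}\operatorname{row}_{i_m}(T)+a_k\operatorname{row}_k(T)
=\sum_{m=1}^{r}\bigl((-1)^{m+1}-1\bigr)(\det S_m)\operatorname{row}_{i_m}(T),
\]
which need not vanish (already for $r=2$). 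The fix is trivial: set $a_{i_m}=\det S_m$ without the sign. Everything else --- the nonvanishing $a_k=-\det S\neq 0$, the lift from the $r$ chosen columns to all columns, and the height estimate $|a_i|\le r!\,C^r\le d!\,C^d$ --- is correct.
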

\begin{proof}
    Let \[\vec{v_i}=(t_{i1},t_{i2},\ldots,t_{id})\in \F_p^d\]
    be the $i$th row vector of $T.$ If $\vec{v_i}=0$ for some $i,$ we can take $a_i=1$ and $a_j=0$ for $j\neq i.$ Hence we may assume $\vec{v_i}\neq 0$ for all $i.$ Without loss of generality, assume $\{\vec{v_1},\ldots,\vec{v_k}\}$ forms a maximal linearly independent set, since $T$ is not invertible we must have $k<d.$ Let $T^\prime\in M_k(\F_p)$ be a submatrix of $T$ using the first $k$ rows so that ${\rm rank}(T^\prime)=k.$ Suppose the columns of $T^\prime$ correspond to the $j_1<j_2<\cdots<j_k$th columns of $T,$ let \[\vec{u}_i=(t_{ij_1},t_{ij_2},\ldots,t_{ij_k})\in\F_p^k,\] 
    We claim that $\vec{u}_{(k+1)}\neq0.$ Indeed, by our choice of $\{v_i\}$, there exists a linear combination 
    \[\vec{v}_{(k+1)}=\sum_{i=1}^kb_i\vec{v}_i,\]
    where the $b_i$'s are not all zero since $\vec{v}_{k+1}\neq 0.$ This implies \[\vec{u}_{(k+1)}=\sum_{i=1}^kb_i\vec{u}_i\]
    and hence $\vec{u}_{k+1}\neq 0.$

    Now by adding the $(k+1)$th row and another column to $T^\prime,$ we obtain a matrix $T^{\prime\prime}\in M_{k+1}(\F_p)$ with ${\rm rank}(T^{\prime\prime})=k$ and all the row vectors of $T''$ are nonzero. It follows that the adjugate matrix ${\rm adj}(T'')$ \footnote{Which is the $(k+1) \times (k+1)$ matrix such that ${\rm adj}(T'')\cdot T'' = {\rm det}(T'') I_{k+1}$, where $I_{k+1}$ is the $(k+1) \times (k+1)$ identity matrix.} of $T''$ has the $(k+1)$th row $(a_1,a_2,\ldots,a_{k+1})$ with $a_{k+1}\neq 0$. Since ${\rm det}(T'')=0$, we get \[\sum_{i=1}^{k+1}a_i\vec{u}_i=0.\] 
    From the fact that $\{\vec{v}_1,\vec{v}_2,\ldots,\vec{v}_k\}$ forms a maximal linearly independent set we can deduce \[\sum_{i=1}^{k+1}a_i\vec{v}_i=0.\] 
    Set $a_i=0$ for all $i>k+1$, since each entry of ${\rm adj}(T'')$ is the determinant of some submatrix of $T'',$ it follows that $|a_i|\leq d!C^d.$ This gives us a nonzero vector $(a_1,\ldots,a_d)\in\F_p^d$ so that \[(a_1,a_2,\cdots,a_d)T=\sum_{i=1}^da_i\vec{v}_i=0.\]
\end{proof}

\begin{proof}[Proof of Theorem 3]
    For any $i,j\in[d],$ 
    \begin{equation}\label{yiyj}
    y_iy_j^\prime=\sum_{k}a_{ij}^{(k)}x_k,\ 
    {\rm where\ } |a_{i,j}^{(k)}| \leq N.
    \end{equation}
    Now since $\lambda y_iy_j^\prime\in B$ for any $\lambda\in \Pi := \{ uv\ :\ |u| \leq c N^{1-\epsilon},\ |v| \leq c' N^\epsilon \}$, consider \[\{(\lambda+\mu)y_iy_j^\prime\ :\ \lambda,\mu\in\Pi\},\] As in the proof of Theorem 2, this set will include $wy_iy_j^\prime$ for any $|w|<c^{\prime\prime} N,$ where $0 < c^{\prime\prime}< 1$ is some constant depending on $c$ and $c^\prime.$ 
    This implies that 
    $$
    wy_iy_j^\prime\in AA^\prime+AA^\prime\subset 2B
    $$
    for any $|w|<c^{\prime\prime} N.$ Therefore we have 
    \[w\sum_{k}a_{ij}^{(k)}x_k=\sum_{k}b_w^{(k)}x_k\] 
    where $|b_w^{(k)}|\leq 2N.$ If 
    \[
    \max\{|a_{ij}^{(k)}|:k\in[d]\}\geq\frac{3}{c^{\prime\prime}}, 
    \]
    then for some $|w|<c^{\prime\prime}N$ we would have
    \[
    2N<\max\{|wa_{ij}^{(k)}|:k\in[d]\} \leq 4N, 
    \]
    contradicts with $B$ being $6$-isolated. Hence 
\begin{equation}\label{4}
\max\{|a_{ij}^{(k)}|:k\in[d]\}<\frac{3}{c^{\prime\prime}}.
\end{equation}
    Now fix some $i\in[d],$ and let $\vec{v}=(x_1,x_2,\ldots,x_d)^{T}$, $\vec{u^\prime}=(y_1^\prime,y_2^\prime,\ldots,y_d^\prime)^T\in\F_p^d$.
    Also, let $T\in M_d(\F_p)$ be the matrix whose entry in the $j$th row and $k$th column is
    $$
    T_{j,k}\ =\ a_{ij}^{(k)}
    $$
    so that (\ref{yiyj}) becomes
    \begin{equation} \label{Tveqn}
    T\vec{v}\ =\ y_i \vec{u^\prime}.
    \end{equation}
    From (\ref{4}) it follows that all the entries of the adjugate matrix ${\rm adj}(T)$ are bounded by 
    \[C=(d-1)!3^{d-1}/(c^{\prime\prime})^{d-1}.\] 
    Moreover, $T$ must be invertible since otherwise from Lemma 4, there exists a nonzero vector $(a_1,a_2,\ldots,a_d)\in\F_p^d$ with $|a_i|$ bounded by some constant depending on $c''$ and \[(a_1,a_2,\ldots,a_d)T=0.\]
    Combining this with (\ref{Tveqn}) gives us 
    $$
    y_i\cdot\sum_{j=1}^da_jy_j^\prime=(a_1,a_2,\ldots,a_d)T\vec{v}=0,
    $$
    which would imply that $A^\prime$ is not proper, a contradiction.
    
    Multiplying (\ref{Tveqn}) by ${\rm adj}(T)$ on the left on both sides we get
    $$
    {\rm det}(T) \vec v\ =\ y_i \cdot {\rm adj}(T) \vec{u'}.
    $$
    Taking linear combinations of coordinates on both sides (using coefficients $b_j$), we can deduce
    \[
    A_1 :=\{{\rm det}(T)\sum_jb_jx_j:|b_j|<\frac{c^\prime }{C}N^{\epsilon}\}\subset y_i\cdot A^\prime. 
    \]
    
    Let $\vec{u}=(y_1,y_2,\ldots,y_d)^T$. As before, we define a matrix $T'\in M_d(\F_p)$ where the entry in the $j$th row, $k$th column is
    $$
    T'_{j,k}\ =\ a_{ji}^{(k)}.
    $$
    Again (\ref{yiyj}) becomes 
    \begin{equation}\label{T'veqn}
    T^\prime \vec{v}\ =\ \vec{u} y_i^\prime,
    \end{equation}
    and from (\ref{4}), all the entries of ${\rm adj}(T^\prime)$ are bounded by $C$. Using the same argument that we did earlier for the matrix $T$, we can deduce that $T^\prime$ is also invertible since $A$ is proper.  Multiplying both sides of (\ref{T'veqn}) on the left by ${\rm adj}(T^\prime)$ we get 
    \[
    A_2 :=\{{\rm det}(T^\prime)\sum_jb_jx_j:|b_j|<\frac{c}{C}N^{1-\epsilon}\}\subset A \cdot y_i^\prime. 
    \]
    Hence 
    \[
    A_1A_2\subset y_i\cdot (A' A)\cdot y_i^\prime \subset y_i B y_i^\prime.
    \]
    Now for any $j,k\in[d],$ we have 
    \[
    {\rm det}(T){\rm det}(T') x_jx_k\ =\ y_i\left (\sum_{l=1}^ds_{jk}^{(l)}x_l
    \right ) y_i^\prime
    \]
    and the set 
    \[
    \{{\rm det}(T){\rm det}(T')wx_jx_k:|w|<\frac{cc^\prime}{C^2}N\}
    \]
    will be a subset of $y_i(2B)y_i^\prime$ from the same argument as the one at the beginning of our proof. 
    Again from the fact that $B$ is $6$-isolated we would have 
    \[
    \max\{|s_{jk}^{(l)}|:l\in[d]\}<C^\prime
    \]
    for some $C^\prime=C^\prime(c,c^\prime,d)$. Thus there exists $T_j\in M_d(\F_p)$ whose entries are bounded by $C^\prime$ so that 
    $$
    {\rm det}(T){\rm det}(T') x_j \vec{v}\ = 
    y_i(T_j\vec{v}) y_i^\prime.
    $$
    This equality implies that for any $j\in[d]$, ${\rm det}(T){\rm det}(T^\prime)x_j/(y_iy_i^\prime)$ is an eigenvalue of $T_j$, hence is the root of the characteristic polynomial of $T_j,$ which has degree $d$ and height bounded by some constant depending on $C^\prime.$ From Lemma 4 and the fact that $B$ is proper, $T_1$ must be invertible. Notice that all these eigenvalues share the same eigenvector $\vec{v},$ thus \[({\rm adj}(T_1)T_j)\vec{v}={\rm det}(T_1)T_1^{-1}T_j\vec{v}={\rm det}(T_1)\frac{x_j}{x_1}\vec{v}={\rm det}(T_1) x_j\vec{v}. \] 
    This shows that ${\rm det}(T_1)x_j$ is the root of the characteristic polynomial of ${\rm adj}(T_1)T_j,$ hence $x_j$ is the root of some polynomial of degree at most $d$ with bounded height.
\end{proof}
\begin{rmk}
	The same proof works for non-symmetric GAP $B$ under slightly stronger conditions. Consider $B-B,$ which is a symmetric GAP, and assume the two symmetric GAPs $A-A$ and $A^\prime-A^\prime$ are proper, then $AA^\prime\subset B$ implies that $(A-A)(A^\prime-A^\prime)\subset 2(B-B).$ If $B$ is $24$-isolated, then $2(B-B)$ is $6$-isolated and we can repeat the proof for $2(B-B),A-A$ and $A^\prime-A^\prime.$
\end{rmk}
\begin{rmk}
	If we remove the condition that $x_1=1,$ we can only prove that the ratio $x_i/x_j$ of any two generators $x_i,x_j$ satisfies a low-height, low-degree polynomial equation. This is what we expect since the condition $AA^\prime\subset B$ is not invariant under dilation in general.
\end{rmk}
Since the conclusion of Theorem 3 is about roots of polynomials, which also make sense in the non-commutative ring $M_n(\F_p)$, a natural question is if we consider GAPs in $M_n(\F_p),$ can we get a result similar to Theorem 3? It turns out that under some extra conditions, we can modify the proof of Theorem 3 to get the following:
\begin{cor}
    Let $0<c,c^\prime,\epsilon<1,n,d\in\N,N=N(p)\in\N$ with $\lim_{p\to \infty}N(p)=\infty$, \[B=\{\sum_{i=1}^da_iX_i:|a_i|\leq N\}\subset M_n(\F_p)\] be $6$-isolated  with $X_1=I_n.$ Suppose there exist proper GAPs 
    \[
    A=\{\sum_{i=1}^da_iY_i:|a_i|\leq cN^{1-\epsilon}\},\ A^\prime=\{\sum_{i=1}^da_iY_i^\prime:|a_i|\leq c^\prime N^{\epsilon}\} 
    \]
    such that $AA^\prime\subset B,A^\prime A\subset B.$ Moreover, assume $Y_i,Y_j^\prime$ are invertible for some $i,j\in[d]$. Then there is a constant $C=C(c,c^\prime,d)$  so that when $p$ is sufficiently large, for any $k\in[d],$ there exists some polynomial $f_k\in\Z[x]\setminus\{0\}$ of degree at most $d$ with $H(f_k)\leq C$ and $\overline{f}_k(X_k)=0$ in $M_n(\F_p)$.
\end{cor}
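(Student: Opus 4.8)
The plan is to rerun the proof of Theorem~3, keeping careful track of the order of multiplications, and then to replace the eigenvalue argument at the very end by an application of the Cayley--Hamilton theorem; the extra hypotheses $A^\prime A\subset B$ and ``$Y_i,Y_j^\prime$ invertible for some $i,j$'' are exactly what is needed to make these modifications go through. Fix $i_0,j_0\in[d]$ with $Y_{i_0}$ and $Y_{j_0}^\prime$ invertible. Writing $Y_{i_0}Y_j^\prime=\sum_k a_{i_0j}^{(k)}X_k$ and $Y_jY_{j_0}^\prime=\sum_k a_{jj_0}^{(k)}X_k$ with all coefficients bounded by $N$, then dilating by integers $w$ with $|w|<c^{\prime\prime}N$ and using $AA^\prime+AA^\prime\subset 2B$, the $6$-isolation of $B$ forces the $d\times d$ coefficient matrices $T=(a_{i_0j}^{(k)})_{j,k}$ and $T^\prime=(a_{jj_0}^{(k)})_{j,k}$ to have entries bounded by a constant, exactly as in the derivation of (\ref{4}). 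Since $T,T^\prime$ have scalar entries, the analogue of (\ref{yiyj}) becomes $T\vec{v}=Y_{i_0}\vec{u}^\prime$ and $T^\prime\vec{v}=\vec{u}\,Y_{j_0}^\prime$, where $\vec{v}=(X_1,\dots,X_d)^{T}$, $\vec{u}=(Y_1,\dots,Y_d)^{T}$, $\vec{u}^\prime=(Y_1^\prime,\dots,Y_d^\prime)^{T}$. Lemma~4 together with the properness of $A^\prime$ (resp.\ $A$) and the invertibility of $Y_{i_0}$ (resp.\ $Y_{j_0}^\prime$) shows $T$ and $T^\prime$ are invertible, so multiplying by the adjugates gives $A_1:=\{\det(T)\sum_j b_jX_j:|b_j|<c_1N^{\epsilon}\}\subset Y_{i_0}A^\prime$ and $A_2:=\{\det(T^\prime)\sum_j b_jX_j:|b_j|<c_2N^{1-\epsilon}\}\subset AY_{j_0}^\prime$ for suitable constants $c_1,c_2$. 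Hence $A_1A_2\subset Y_{i_0}(A^\prime A)Y_{j_0}^\prime\subset Y_{i_0}BY_{j_0}^\prime$ --- the only place $A^\prime A\subset B$ is used --- and repeating the dilation/isolation argument for $A_1A_2$ yields, for all $j,k\in[d]$,
\begin{equation}\label{cor-s}
\det(T)\det(T^\prime)\,X_jX_k\ =\ Y_{i_0}\Big(\sum_{l=1}^d s_{jk}^{(l)}X_l\Big)Y_{j_0}^\prime,\qquad |s_{jk}^{(l)}|<C_1,
\end{equation}
where the cancellations of $Y_{i_0},Y_{j_0}^\prime$ implicit in applying $6$-isolation are legitimate precisely because both are invertible.

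Next, fix $j\in[d]$ and set $Z_j:=Y_{i_0}^{-1}X_jY_{i_0}$, which is conjugate to $X_j$. Let $T_j$ be the $d\times d$ integer matrix with $(T_j)_{k,l}=s_{jk}^{(l)}$, let $T_1$ be the corresponding matrix for $j=1$, and put $U_j:=\mathrm{adj}(T_1)T_j$; all three have bounded integer entries, and $T_1$ is invertible by Lemma~4 and the properness of $B$ (if $(a_1,\dots,a_d)T_1=0$ then, using the case $j=1$ of (\ref{cor-s}) and the invertibility of $Y_{i_0},Y_{j_0}^\prime$, one gets $\sum_k a_kX_k=0$). Regarding $\vec{v}$ as the $n\times nd$ block matrix $[\,X_1\mid\cdots\mid X_d\,]$, a direct computation from (\ref{cor-s}) --- using the case $j=1$ and $X_1=I_n$ to eliminate $Y_{i_0}$ and $Y_{j_0}^\prime$ --- yields the identity $\vec{v}\,(U_j^{T}\otimes I_n)=\det(T_1)\,Z_j\vec{v}$. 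Since left multiplication by $Z_j$ commutes with right multiplication by the scalar matrix $U_j^{T}\otimes I_n$, iterating this gives $\det(T_1)^{t}Z_j^{t}\vec{v}=\vec{v}\,\big((U_j^{t})^{T}\otimes I_n\big)$ for all $t\ge0$. Writing $\chi_{U_j}(y)=\sum_{t=0}^{d}e_ty^{t}\in\Z[y]$ for the characteristic polynomial of $U_j$ and invoking Cayley--Hamilton $\sum_t e_tU_j^{t}=0$, we obtain $\big(\sum_{t=0}^{d}e_t\det(T_1)^{t}Z_j^{t}\big)\vec{v}=0$, and reading off the block equal to $X_1=I_n$ yields $\sum_{t=0}^{d}e_t\det(T_1)^{t}Z_j^{t}=0$, i.e.\ $g_j(Z_j)=0$ with $g_j(x):=\chi_{U_j}(\det(T_1)\,x)\in\Z[x]$. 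This $g_j$ is nonzero (leading coefficient $\det(T_1)^{d}$), has degree $d$, and has height bounded by a constant $C=C(c,c^\prime,d)$; conjugating by $Y_{i_0}$ gives $g_j(X_j)=Y_{i_0}\,g_j(Z_j)\,Y_{i_0}^{-1}=0$, so $f_j:=g_j$ works, and for $j=1$ one may simply take $f_1(x)=x-1$.

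The main obstacle is this last step: over $M_n(\F_p)$ the generator vector $\vec{v}$ is no longer a \emph{scalar} eigenvector of $U_j$, so the eigenvalue argument of Theorem~3 does not transfer verbatim. The resolution is that the ``matrix-valued eigenvalue'' here is left multiplication by $Z_j=Y_{i_0}^{-1}X_jY_{i_0}$, and although this does not commute with arbitrary matrices, it \emph{does} commute with the $\F_p$-linear action of the integer matrix $U_j$ on the index coordinate; this commutation is exactly what lets us push Cayley--Hamilton for $U_j$ down to a low-degree, bounded-height polynomial identity for $Z_j$, hence for $X_j$. The remaining changes relative to Theorem~3 are routine bookkeeping: one must invoke $A^\prime A\subset B$ because $A_1A_2$ lands inside $Y_{i_0}A^\prime AY_{j_0}^\prime$ rather than $Y_{i_0}AA^\prime Y_{j_0}^\prime$, and one must invoke the invertibility of $Y_{i_0}$ and $Y_{j_0}^\prime$ at each point where a factor of $Y_{i_0}$ or $Y_{j_0}^\prime$ is cancelled; note that a single invertible $Y_i$ and a single invertible $Y_j^\prime$ suffice, with possibly different indices, since the left factor in the construction only uses $Y_{i_0}$ and the right factor only $Y_{j_0}^\prime$.
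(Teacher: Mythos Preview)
Your proof is correct and follows essentially the same route as the paper's: both rerun the Theorem~3 argument verbatim through the construction of $A_1,A_2$ and the bounded-entry matrices $T_j$, then replace the eigenvalue step by Cayley--Hamilton applied to $U_j=\mathrm{adj}(T_1)T_j$, using that the scalar action of $U_j$ on $\vec v$ commutes with left multiplication by $Z_j=Y_{i_0}^{-1}X_jY_{i_0}$ to conclude $\chi_{U_j}(\det(T_1)Z_j)=0$ and hence (reading off the $X_1=I_n$ block and conjugating back) a bounded-height degree-$d$ relation for $X_j$. The only differences are cosmetic: you encode the $U_j$-action via $U_j^{T}\otimes I_n$ on a row block vector and make the final rescaling $g_j(x)=\chi_{U_j}(\det(T_1)x)$ explicit, whereas the paper keeps $\vec v$ as a column and records the conclusion as $\overline{f}_k(\det(T_1)X_k)=0$.
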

\begin{proof}
    Replacing $x_i,y_i,y_i^\prime$ by $X_i,Y_i,Y_i^\prime$ in the proof of Theorem 3, in the same way we obtain 
    \[A_1\subset Y_i\cdot A^\prime,\ A_2\subset A\cdot Y_j^\prime.\]
    By assumption $A^\prime A\subset B,$ it follows that 
    \[A_1A_2\subset Y_i\cdot B\cdot Y_j^\prime.\]
    Now for any $k\in[d],$ there exists $T_k\in M_d(\F_p)$ with bounded entries so that 
    $$
    {\rm det}(T){\rm det}(T') X_k \vec{v}\ = 
    Y_i(T_k\vec{v}) Y_j^\prime.
    $$
    Since $X_1=I_n,$ $Y_i, Y_j^\prime$ are invertible, by Lemma 4 and the fact that $B$ is proper, we conclude that $T_1$ must be invertible. Moreover, 
    \[T_k\vec{v}\ ={\rm det}(T){\rm det}(T') Y_i^{-1}X_k \vec{v}(Y_j^\prime)^{-1},\ {\rm det}(T){\rm det}(T')T_1^{-1}\vec{v}\ =Y_i\vec{v}Y_j^\prime.\]
    Therefore
    \[({\rm adj}(T_1)T_k)\vec{v}={\rm det}(T_1)T_1^{-1}T_k\vec{v}={\rm det}(T_1) Y_i^{-1}X_kY_i\vec{v}.\]
    Let $f_k(x)\in\Z[x]$ be the characteristic polynomial of ${\rm adj}(T_1)T_k,$ then by Cayley-Hamilton's theorem
    \[\overline{f}_k({\rm det}(T_1) Y_i^{-1}X_kY_i)\vec{v}=\overline{f}_k({\rm adj}(T_1)T_k)\vec{v}=0.\]
    In particular, using the fact $X_1=I_n$ we can deduce 
    \[Y_i^{-1}\overline{f}_k({\rm det}(T_1)X_k)Y_i=\overline{f}_k({\rm det}(T_1) Y_i^{-1}X_kY_i)=0,\]
    which implies \[\overline{f}_k({\rm det}(T_1)X_k)=0.\]
\end{proof}

\section{Open questions}
A natural question is can we weaken the condition on $B$ or $A$ to get the same result as in Theorem 3. A simple example shows that if we remove the isolated condition on $B$ and allow $A$ to be some arbitrary set with $|A|\approx|B|^{1/2}$, then the conclusion of Theorem 3 is no longer true. Let $B=\{a+bN:a,b\in[0,N-1]\}=[0,N^2-1],$ which is a degenerate rank-$2$ GAP. We can take $A=[0,N-1]$ so that $AA\subset B$ and $|A|\approx|B|^{1/2}.$ By choosing $N$ to be about $p^{1/4}$ so that it's not the root of a low-height, low-degree polynomial, we've got a counterexample. From the argument of Theorem 2 we guess the isolated condition could probably be replaced by some restriction on the size of $B.$ 

One can also consider unbalanced GAPs. Suppose \[B=\{\sum_{i=1}^da_ix_i:|a_i|\leq N_i\}\subset\F_p\]and \[A=\{\sum_{i=1}^da_iy_i:|a_i|\leq cM_i\},\ A^\prime=\{\sum_{i=1}^da_iy_i^\prime:|a_i|\leq c^\prime M_i^\prime\}\]with $|A||A^\prime|\approx|B|.$ If we make no restriction on the $N_i$'s, then there is no hope to obtain the same result. For example, $B$ can be constructed with $N_i=1$ for some $i,$ and the corresponding $x_i$ is not the root of some low-height, low-degree polynomial. The question is whether the conclusion of Theorem 3 remains true if we assume \[\lim_{p\rightarrow\infty}N_i=\lim_{p\rightarrow\infty}M_i=\lim_{p\rightarrow\infty}M_i^\prime=\infty \]for each $i\in[d]$ to rule out the extreme case.

\bibliographystyle{abbrv}
\bibliography{bibliography}

\begin{thebibliography}{1}

\bibitem{Chang:2008aa}
M.-C. Chang.
\newblock On a question of {D}avenport and {L}ewis and new character sum bounds in finite fields.
\newblock {\em Duke Math. J.}, 145(3):409--442, 2008.

\bibitem{Elekes:2003aa}
G.~Elekes and I.~Z. Ruzsa.
\newblock Few sums, many products.
\newblock {\em Studia Sci. Math. Hungar.}, 40(3):301--308, 2003.

\bibitem{Erdos:1983aa}
P.~Erd\H{o}s and E.~Szemer\'{e}di.
\newblock On sums and products of integers.
\newblock In {\em Studies in pure mathematics}, pages 213--218. Birkh\"{a}user, Basel, 1983.

\bibitem{Hanson:2015aa}
B.~Hanson.
\newblock Character sums over {B}ohr sets.
\newblock {\em Canad. Math. Bull.}, 58(4):774--786, 2015.

\bibitem{Kerr:2021aa}
B.~Kerr.
\newblock Some multiplicative equations in finite fields.
\newblock {\em Finite Fields Appl.}, 75:Paper No. 101883, 28, 2021.

\bibitem{Kerr:2024aa}
B.~Kerr, J.~Mello, and I.~E. Shparlinski.
\newblock An effective local-global principle and additive combinatorics in finite fields.
\newblock {\em J. Anal. Math.}, 152(1):109--135, 2024.

\bibitem{Mohammadi:2023aa}
A.~Mohammadi and S.~Stevens.
\newblock Attaining the exponent 5/4 for the sum-product problem in finite fields.
\newblock {\em Int. Math. Res. Not. IMRN}, (4):3516--3532, 2023.

\bibitem{Rudnev:2020aa}
M.~Rudnev, G.~Shakan, and I.~D. Shkredov.
\newblock Stronger sum-product inequalities for small sets.
\newblock {\em Proc. Amer. Math. Soc.}, 148(4):1467--1479, 2020.

\bibitem{Tao:2006aa}
T.~Tao and V.~Vu.
\newblock {\em Additive combinatorics}, volume 105 of {\em Cambridge Studies in Advanced Mathematics}.
\newblock Cambridge University Press, Cambridge, 2006.

\end{thebibliography}

\end{document}